\documentclass[10pt,a4paper,reqno]{amsart}
\usepackage{amsfonts,amsthm,latexsym,amsmath,amssymb,amscd,amsmath, epsf}
\usepackage{graphicx}


\newtheorem{definition}{Definition}
\newtheorem{theorem}{Theorem}
\newtheorem{lemma}{Lemma}
\newtheorem{corollary}{Corollary}
\newtheorem{proposition}{Proposition}

\newtheorem{problem} {Problem}

\newtheorem{conjecture}{Conjecture}

\newcommand{\al}{\alpha}

\newcommand{\la}{\lambda}

\newcommand {\bC} {\mathbb {C}}

\newcommand {\bCP} {\mathbb {CP}^1}

\newcommand {\Ga} {\Gamma}

\newcommand {\eps} {\epsilon}

\newcommand {\HH} {\mathcal H}
\newcommand {\CC} {\mathcal C} 
\newcommand {\ST} {\mathcal {ST}} 
\newcommand {\SD} {\mathcal {SD}} 
\newcommand {\AST} {\mathcal {AST}}

\begin{document}
             \numberwithin{equation}{section}

             \title[A note on Evgrafov-Fedoyuk's  theory and quadratic differentials]
             {A note on Evgrafov-Fedoyuk's  theory and quadratic differentials}

\author[B.~Shapiro]{Boris Shapiro}
\address{Department of Mathematics, Stockholm University, SE-106 91, Stockholm,
            Sweden}
\email{shapiro@math.su.se}
\date{\today}
\keywords{Spectral asymptotics, quadratic differentials,  singular  planar metric, geodesics}
\subjclass[2000]{Primary 34M40; 34E20, Secondary 34E10}

\begin{abstract} The purpose of this short paper is to recall the theory of  the (homogenized) spectral problem for a Schr\"odinger equation with a polynomial potential developed in the 60's by M.~Evgrafov with M.~Fedoryuk, and, by Y.~Sibuya  and its relation with  quadratic differentials.  We derive from these results  that the  accumulation rays of the eigenvalues of this problem are in $1-1$-correspondence with the short geodesics of the singular planar metrics on $\bC P^1$ induced by the corresponding quadratic differential. Using this interpretation we show that for a polynomial potential of degree $d$ the number of   such accumulation rays can be any positive integer  between $(d-1)$ and $d \choose 2$. 

\end{abstract}

\maketitle


\section{Introduction}
\label{sec:int}

This paper is motivated by a recent progress in the study of the spectral discriminant in the space of polynomial potentials, see   \cite  {GE},  \cite{GE1}, \cite{AG}. We start by explaining some important results scattered through  \cite{Fe1}, \cite{EF},   \S 6 in ch. III of \cite {Fe}, and \cite{Sib1}. 

Consider  a differential equation of the form 
\begin{equation}Ê\label{eq:Schr}
-y''+P(z)y=0
\end{equation} where $P(z)=a_0z^d+a_1z^{d-1}+...+a_d,\; a_0\neq 0$ is  an arbitrary  complex polynomial potential  of degree $d$.   
Setting $\phi_0=\arg a_0$  we define the (open) {\bf Stokes sectors} $S_j, j=0,...,d+1$ of \eqref{eq:Schr}  as  given by the condition: 
$$S_j=\left\{z: \left|\arg z-\frac {\phi_0}{d}- \frac{2\pi j}{d+2}\right|< \frac{\pi}{d+2}\right\}.$$
We consider the Stokes sectors $S_j$ as cyclicly ordered on $\bC$, i.e. $S_{d+2}$ is neighboring to $S_{d+1}$ and $S_0$. Notice that for an arbitrary   $P(z)$ the definition of its Stokes sectors depends only on $\phi_0$. In particular, the multiplication of $P(z)$ by a positive number preserves   the set of the Stokes sectors.  

It is well-known, see e.g. \cite{Sib}, \cite{HsSi} that for each open sector $S_j$ there exists and unique (up to a scalar factor) non-trivial solution $Y_j$ of   \eqref{eq:Schr} which is exponentially decreasing along any ray within this sector. Such a $Y_j$ is called {\bf subdominant}Ê in $S_j$.  It is convenient to think of $Y_j$'s as points on $\bCP$ where the latter space is the projectivization of the linear space of solutions of \eqref{eq:Schr}.)  For a generic  $P(z)$ all its $Y_j,\,j=0,...,d+1$ are distinct. Moreover, all  restrictions on possible configurations of subdominant solutions considered as a divisor of degree $d+2$ on $\bCP$ were described  by R.~Nevanlinna already in  \cite {Nev}. These restrictions are: (i) one has at least $3$ geometrically distinct points in this divisor and (ii) the multiplicity of each point of this divisor does not exceed $\left[\frac{d+2}{2}\right]$.  These restrictions  have a  strong and so far still unexplained resemblance with the notion of stability in  modern algebraic geometry. 

The main object of study of many papers is the following important complex analytic hypersurface and its restrictions to different natural subspaces. 
\begin{definition}\label{spec} The (extended) spectral discriminant $Spc_d\subset \CC_d$ is the set of all potentials $P(z)$ of degree $d$ such  that   \eqref{eq:Schr}  possesses a solution subdominant in (at least) $2$ different Stokes sectors. Here $\CC_d\simeq \bC^*\times \bC^d$ stands for the set of all coefficients $(a_0,a_1,...,a_d),\; a_0\neq 0$.
\end{definition}Ê

For any fixed $a_0$ the restriction of $Spc_d$ is given by an entire function of the variables $(a_1,...,a_d)$ and the description of the behavior of the spectrum when $a_0\to 0$ can be found in e.g. \cite {GE1} and references therein.  The group of affine coordinate changes $z\to az+b$ acts on the space of equations \eqref{eq:Schr} preserving $Spc_d$. Finally, notice that the space $\CC_d$ carries the obvious free action of $\bC^*$ by multiplication of a arbitrary polynomial of degree $d$ by a non-vanishing complex number. This action does not preserve $Spc_d$ and the main goal of this paper is to 
present some new results about the restriction of $Spc_d$ to the orbits of the latter action.

In other words, we consider the following (homogenized) spectral problem.

\begin{problem}
 For a given  polynomial $P(z)$ of degree $d$  find the set of all non-vanishing $\la\in \bC^*$   for which the equation 
\begin{equation}\label{eq:anal}
      -y''+\la^2 P(z)y=0, 
      \end{equation}
Ê has a solution subdominant in at least two distinct Stokes sectors.
\end{problem}

This problem was consider in the writings of M.~Fedoryuk (some of them joint) with M.~Evgrafov and  as well as Y.~Sibuya, F.~W.~J.~Olver and, in fact, even much earlier in the Ph.D. thesis of G.~D.~Birkhoff  from 1914, see \cite{Bi}. The spectrum of this problem is always discrete. The main known results about the latter problem are
\begin{enumerate}
 \item[(*)] the description of the condition determining  the accumulation rays of the spectrum, see Theorem 6.2, \cite {Fe1}; 
 
 \item[(**)] the description of the asymptotic density of the eigenvalues along such an accumulation ray,  see Theorem 6.3, \cite {Fe1}.
 \end{enumerate}

To present these results in details  we need to recall  the notion of the {\em Stokes graph} and of  {\em Stokes complexes}  of \eqref{eq:Schr},  see e.g. \cite {EF} and \cite {Wa2}.  (We follow the terminology of M.~Fedoryuk which apparently is not quite standard in the field.) 

\medskip 
\noindent 
{\em Notation.} Each root of $P(z)$ is classically called a {\em turning point} of (\ref{eq:Schr}). 
 A {\em Stokes line} of (\ref{eq:Schr}) is a containing at most two turning points 
 (finite or infinite) segment of the  real 
 analytic curve solving (w.r.t. $z$) the equation:  
 \begin{equation} \label{eq:stokes}
 Re\, \xi_{z_{0}}(z)=0\quad \text{where} \quad \xi_{z_{0}}(z)=\int_{z_{0}}^z\sqrt{P(u)}du,
 \end{equation} 
   $z_{0}$ being one of the turning points of (\ref{eq:Schr}). 
 The {\em Stokes graph} (denoted by $\ST_{P}$) of the equation~(\ref{eq:Schr}) is 
 the union of all its Stokes curves. Connected components of $\ST_{P}$ will be 
 referred to as {\em Stokes complexes} and 
 connected components  of $\bC\setminus \ST_{P}$ will be called {\em 
 admissible domains}. 
  One can distinguish between two natural types of admissible 
     domains. Namely, an admissible domain is called the {\em 
     half-plane type} if the function $\xi(z)$ maps it to 
     $Re\,\xi(z) >a$ or $Re\,\xi(z) <a$ for some real $a$. 
     Analogously, an admissible domain is called the {\em 
     strip type} if the function $\xi(z)$ maps it to 
     $a<Re\,\xi(z) <b$ for some real $a<b$. For polynomial $P(z)$ all 
     admissible domains belong to one of these two type (which is no 
     longer true for entire or rational potentials). 

 A Stokes complex is called {\em  simple} if it 
 contains exactly one turning point and {\em non-simple} otherwise. (In Russian, M.~Fedoryuk calls 'non-simple complexes'  as  {\em 'complex complexes'}Ê  but this  terminology sounds  rather odd in English.)   Note that the existence of a non-simple Stokes complex in the Stokes graph of $P(z)$ is equivalent  to the existence of a  Stokes line between two turning points. Such a Stokes line will be called {\em finite} or {\em short}. 
 
 Given a polynomial $P(z)$ consider the family $P_t(z)=e^{2t\sqrt{-1}}P(z),\;\; t\in [0,\pi]$. We will briefly discuss how the Stokes graph changes in this family. Theorem~6.1 of \cite {Fe1}Ê claims that for any $P(z)$ with at least two distinct roots there exist only finitely many values of $t$ for which $P_t(z)$ has a non-simple Stokes complex.  We call a polynomial $P(z)$ {\em Fedoryuk-generic} if it satisfies the following 3 conditions: 
 \begin{enumerate}
 \item[(a)] all its roots are simple;
 \item[(b)] all non-simple complexes arising in the family $P_t(z)$ contain exactly two turning points;
 \item[(c)] for any value of $t$ the Stokes graph of $P_t(z)$ has either none or exactly one non-simple Stokes complex.
 
 \end{enumerate}
 
 A slight generalization and reformulation of Theorem~6.2, \cite{Fe1} adjusted to our context Ê reads as follows. 

\begin{proposition}\label{pr:Fed} Given a Fedoryuk-generic potential $P(z)$ denote by $t_1,...,t_k$ the values of the parameter $t\in[0,\pi]$ for which the Stokes graph of $P_t(z)$ has (and unique) non-simple Stokes complex. Then for any $\eps>0$ all but finite many eigenvalues of the problem \eqref{eq:anal}Ê lie $\eps$-close to the union of $k$ rays with the slopes $\tan t_1,...,\tan t_k$. Moreover, near each such ray (called the {\em accumulation ray of \eqref{eq:anal}}) lie infinitely many such eigenvalues. 
      \end{proposition}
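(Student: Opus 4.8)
The plan is to reduce the homogenized problem \eqref{eq:anal} to Fedoryuk's asymptotic analysis of \eqref{eq:Schr} by separating the modulus and the argument of $\la$. Writing $\la=re^{\sqrt{-1}\,t}$ with $r=|\la|>0$ and $t=\arg\la$, one has $\la^2P(z)=r^2P_t(z)$, so \eqref{eq:anal} becomes $-y''+r^2P_t(z)y=0$. Since multiplication of the potential by the positive number $r^2$ neither changes the Stokes sectors nor the Stokes graph $\ST_{P_t}$ (the defining equation $Re\,\xi_{z_0}(z)=0$ is invariant under $P\mapsto r^2P$), the combinatorial data deciding which subdominant solutions may coincide depends only on $t$, while $r$ serves as the large asymptotic parameter. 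Thus the entire problem is governed by the one-parameter family $P_t$ introduced above.

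First I would record the analytic meaning of an eigenvalue. By Definition~\ref{spec}, $\la$ lies in the spectrum iff some $Y_i$ and $Y_j$, subdominant in distinct sectors $S_i,S_j$, are proportional, i.e. iff the Wronskian $W(Y_i,Y_j)$ vanishes. For large $r$ one evaluates this Wronskian by the WKB (phase-integral) method: connecting $Y_i$ to $Y_j$ across the intervening turning points produces an asymptotic expression whose vanishing is a Bohr--Sommerfeld quantization condition of the form
\begin{equation*}
2r\int_{\ga}\sqrt{P_t(u)}\,du=2\pi\sqrt{-1}\Bigl(n+\tfrac12\Bigr)+o(1),\qquad n\in\bZ,
\end{equation*}
where $\ga$ joins the two turning points whose subdominant solutions are being matched. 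This is exactly the content of Theorem~6.2 of \cite{Fe1} (cf. \cite{EF}), and conditions (a)--(c) in the definition of a Fedoryuk-generic potential guarantee that at the relevant values of $t$ there is a single such $\ga$ between exactly two turning points, so the condition is unambiguous.

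Next I would extract the accumulation rays. Fixing $t$, the displayed condition admits solutions with $r\to+\infty$ if and only if the leading coefficient $\int_{\ga}\sqrt{P_t}\,du$ is purely imaginary, i.e. $Re\int_{\ga}\sqrt{P_t(u)}\,du=0$. But this is precisely the requirement that $\ga$ be a \emph{short} Stokes line of $P_t$, equivalently that $\ST_{P_t}$ contain a non-simple complex. By Theorem~6.1 of \cite{Fe1} this occurs only for the finitely many $t=t_1,\dots,t_k$ in $[0,\pi]$, and at each $t_j$ the quantization condition then yields an infinite sequence $\la_n$ with $|\la_n|\to\infty$ and $\arg\la_n\to t_j$, i.e. clustering along the ray of slope $\tan t_j$. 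Conversely, for $t$ bounded away from every $t_j$ one has $Re\int_{\ga}\sqrt{P_t}\neq0$ for each admissible $\ga$, so the corresponding Wronskian grows exponentially in $r$ and cannot vanish; hence outside any $\eps$-neighbourhood of the $k$ rays only finitely many eigenvalues survive.

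The hard part is the uniformity of the WKB estimates. To place the eigenvalues within $\eps$ of each ray, and simultaneously to exclude eigenvalues elsewhere, one needs the error term $o(1)$ above to be controlled uniformly in $r$ and in $t$, including through the turning points (where the naive WKB ansatz degenerates) and across the Stokes transitions. Simplicity of the roots, condition (a), is what keeps every turning point non-degenerate, so that the standard Airy-type connection formulas apply with uniform remainders; conditions (b)--(c) prevent several short Stokes lines from colliding at a single $t_j$, which would spoil the clean one-integral quantization. Granting these uniform estimates --- the technical heart of \S6 of \cite{Fe1} --- the two halves of the previous paragraph combine to give the stated clustering, completing the proof.
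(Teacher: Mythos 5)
Your proposal is correct and takes essentially the same route as the paper: the paper gives no independent proof of this proposition, presenting it as a reformulation of Theorem~6.2 of \cite{Fe1}, and your reduction $\la^2P(z)=r^2P_t(z)$ (with the observation that positive scaling preserves the Stokes picture) together with the Bohr--Sommerfeld quantization condition is exactly that adjustment, deferring the uniform WKB estimates to \S6 of \cite{Fe1} just as the paper implicitly does. The only discrepancy is a harmless branch convention: your quantization condition carries a factor $\sqrt{-1}$ because $\int_\ga\sqrt{P_t(u)}\,du$ is purely imaginary along a short Stokes line, whereas the paper's expansion \eqref{eq:asymp} is written with a real right-hand side.
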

      
Generalizing Theorem~6.3, \cite{Fe1} we get   that the asymptotic density of the eigenvalues is described as follows. 

\begin{proposition}\label{pr:Feddens} In the notation of Proposition~\ref{pr:Fed} let  $t_j$ be one the values of the parameter $t\in[0,\pi]$ for which the Stokes graph of $P_t(z)$ has (a unique) non-simple Stokes complex. Fixing a sufficiently small $\eps>0$  let $\la_1^{(j)}, \la_2^{(j)}, ... \la_n^{(j)},...$ be the sequence of the eigenvalues of \eqref{eq:anal}Ê   (non-strictly) ordered by their absolute values and lying $\eps$-close to the $j$-th ray, i.e. to the one  with the slope $\tan t_j$. Then when $|\la|\to \infty$ one has the following asymptotic expansion: 
\begin{equation}  \label{eq:asymp}
\la_n \int_\CC \sqrt{P(\xi)} d \xi \sim 2\pi n +\pi +\sum_{j=1}^\infty \la_n^{-j}\int_\CC\al_j(\xi)d\xi. 
\end{equation} 
Here $\CC$ is a simple closed curve containing the short Stokes line and no other turning points in its interior.  
The family of functions $\al_j(z),\; j=0, 1,...$ are defined as follows 
\begin{equation}\label{eq:recc}
\al_0(z)=-\frac{p'(z)}{4p(z)},\;\; \al_{j}(z)=-\frac{1}{2\sqrt{p(z)}}\left(\sum_{m=0}^{j-1}\al_m(z)\al_{j-m-1}(z)+\al_{j-1}'(z)\right).
\end{equation}
\end{proposition}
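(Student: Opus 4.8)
The plan is to run the complex Liouville--Green (WKB) method with respect to the large parameter $\la$ and to read off \eqref{eq:asymp} as a Bohr--Sommerfeld quantization condition attached to the short Stokes line. First I would write a solution of \eqref{eq:anal} as $y=\exp\left(\int^{z}\sigma(u,\la)\,du\right)$, so that the logarithmic derivative $\sigma=y'/y$ obeys the Riccati equation $\sigma'+\sigma^{2}=\la^{2}P$. Inserting the ansatz $\sigma=\la\sqrt{P}+\sum_{j\ge 0}\la^{-j}\al_{j}$ and matching powers of $\la$ recovers \eqref{eq:recc}: the coefficient of $\la$ forces $\al_{0}=-P'/(4P)$, while the coefficient of $\la^{-j}$ gives $2\sqrt{P}\,\al_{j}=-\left(\al_{j-1}'+\sum_{m=0}^{j-1}\al_{m}\al_{j-m-1}\right)$, which is exactly the stated recurrence (here $p=P$). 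Thus the $\al_{j}$ are the higher WKB corrections, $\exp\left(\int\al_{0}\right)=P^{-1/4}$ is the usual transport amplitude, and the two formal solutions $y_{\pm}$ correspond to the two branches of $\sqrt{P}$.

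Next I would turn the eigenvalue condition into an equation for $\la$. On each admissible domain the functions $y_{\pm}\sim P^{-1/4}\exp\!\left(\pm\la\int\sqrt{P}\right)\bigl(1+O(\la^{-1})\bigr)$ are honest asymptotic solutions, the subdominant solution $Y_{i}$ in a Stokes sector being the recessive one there. Across each Stokes line issuing from a simple turning point the local model is Airy's equation, whose connection (Stokes) matrix is explicit. The condition that one and the same solution be subdominant in the two Stokes sectors flanking the short Stokes line between the two turning points $z_{1},z_{2}$ is, after composing the Airy connection matrices around the non-simple complex, equivalent to a quantization condition of the form
\[
\oint_{\CC}\sigma(u,\la)\,du\in 2\pi i\,\bZ,
\]
where $\CC$ is the curve of \eqref{eq:asymp}, enclosing $z_{1},z_{2}$ and no further turning point.

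Finally I would expand the period term by term. The leading contribution is $\la\oint_{\CC}\sqrt{P}$. The $\al_{0}$-period is computed by residues: since $\al_{0}=-\tfrac14\,(\log P)'$ and $\CC$ encircles exactly two simple zeros of $P$, one gets $\oint_{\CC}\al_{0}\,du=-\tfrac14\cdot 2\pi i\cdot 2=-\pi i$, and this half-integer (Maslov) shift is precisely what converts the integer condition above into the summand $\pi$ on the right of \eqref{eq:asymp}. The remaining periods $\la^{-j}\oint_{\CC}\al_{j}$, $j\ge 1$, assemble into the correction series. Solving the quantization condition for the $n$-th eigenvalue along the $j$-th ray and writing $\la=\la_{n}$ then yields \eqref{eq:asymp}, up to the factor of $i$ and the sign absorbed into the orientation of $\CC$ and the chosen branch of $\sqrt{P}$.

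The delicate step is the second one: the algebra of the periods is routine, but real work is needed to prove that the formal WKB series is a genuine \emph{uniform} asymptotic expansion along the whole Stokes structure as $|\la|\to\infty$, and that the composition of the local Airy connection matrices reproduces the clean resonance condition with no spurious contributions. This is exactly where the Evgrafov--Fedoryuk--Olver error estimates and the Fedoryuk-genericity of $P$ (simple turning points, a single non-simple complex containing exactly two turning points) enter, guaranteeing that both the local normal form near each $z_{i}$ and the global matching remain valid uniformly.
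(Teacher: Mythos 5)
Your proposal is correct and coincides with the paper's own treatment: the paper gives no independent argument for this proposition, presenting it as a reformulation of Theorem~6.3 of \cite{Fe1}, and Fedoryuk's proof is exactly your complex-WKB route --- the Riccati ansatz $\sigma=\la\sqrt{P}+\sum_{j\ge0}\la^{-j}\al_j$ reproducing the recurrence \eqref{eq:recc}, the quantization condition $\oint_{\CC}\sigma\,du\in 2\pi i\,\bZ$ from matching the subdominant solutions around the non-simple Stokes complex, and the residue computation $\oint_{\CC}\al_0\,du=-\pi i$ (two simple zeros of $P$ inside $\CC$) accounting for the summand $\pi$ in \eqref{eq:asymp}. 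You also correctly identify where the real analytic work lies (uniform validity of the expansion and of the Airy-model connection formulas along the whole Stokes structure); the paper, like you, delegates precisely this to the Evgrafov--Fedoryuk error estimates rather than re-proving it.
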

      
          Let us now reformulate the above statements in terms of  quadratic differentials and present some new results. We start with standard definitions.  The basic references for quadratic differentials are \cite{Str} and \cite{Je}.

\begin{definition} A {\em meromorphic quadratic differential $\Psi$} on a (compact) Riemann  surface $\Ga$ is a meromorphic section of  the square $(T_\bC^*\Ga)^{\otimes 2}$ of the holomorphic cotangent bundle $T_\bC^*\Ga$.   The zeros and poles of $\Psi$ are called its {\em singular}  points.  The set of all singular points of $\Psi$ on $\Ga$ is denoted by $Sing_\Psi$.
\end{definition}

If $z$ is a local holomorphic coordinate near a point $p$ on $\Ga$ then any  meromorphic quadratic differential $\Psi$ is locally represented as $f(z)(dz)^2$ where $f(z)$ is a meromorphic function.

\begin{definition} Given a meromorphic quadratic differential $\Psi$ on $\Ga$ we define two distinct foliations on $\Ga\setminus Sing_\Psi$ as follows. At each non-singular point
there are exactly two directions at which $\Psi$ attains positive/resp. negative values. Integral curves of these direction fields are called {\em the horizontal resp. vertical trajectories} of  $\Psi$.
     \end{definition}

     Obviously, the direction fields  are orthogonal  at each non-singular point.  In the theory of quadratic differentials one usually forgets  about the vertical direction field and studies only the trajectories of the horizontal one. 

\begin{definition}[comp. Definition 20.1 in \cite {Str}]  A trajectory of $\Psi$ is called {\em singular}  if its starts or ends at a singular point.
\end{definition}

\begin{definition} The {\em canonical length element} on $\Ga$ associated with a  quadratic differential $\Psi$ given locally as $\Psi=f(z)(dz)^2$  is defined by
$$|dw|=|f(z)|^{\frac{1}{2}} |dz|.$$
(Local) minimizers of the latter length element are called the {\em geodesics} Êof the quadratic differential 
$\Psi$. Geodesics whose both  endpoints are singular points are called {\em short}.
\end{definition}





\begin{definition}
The {\em distinguished} or {\em canonical} parameter associated with a quadratic differential $\Psi=f(z)(dz)^2$ is defined as $$W=\int\sqrt{f(z)} dz$$
 for some branch of the square root.
\end{definition}

Notice that geodesics are (locally) straight lines in the canonical parameter and short Stokes lines in the family $P_t(z)$ connecting the turning points  are exactly the short  geodesics of the quadratic differential $P(z)dz^2$ associated with the hyperelliptic curve $y^2=P(z)$. Thus, the following 
result holds.

\begin{corollary} \label{cor:quaddiff}
For a Fedoryuk-generic polynomial $P(z)$ the set  of the accumulation rays of the problem \eqref{eq:anal} is in $1-1$-correspondence with the set of the short geodesics of  $P(z)dz^2$. 
\end{corollary}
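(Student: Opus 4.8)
The plan is to unwind both sides of the asserted correspondence until each is expressed through the periods $\int_{z_i}^{z_j}\sqrt{P(u)}\,du$ between the turning points of $P$, and then to match them using the elementary identity $\sqrt{P_t(u)}=e^{it}\sqrt{P(u)}$ coming from $P_t(z)=e^{2it}P(z)$. By Proposition~\ref{pr:Fed} the accumulation rays are indexed bijectively by the parameter values $t_1,\dots,t_k\in[0,\pi)$ at which the Stokes graph of $P_t$ carries its (unique) non-simple complex, that is, a finite Stokes line joining two turning points $z_0,z_1$. Writing $W=\int\sqrt{P(z)}\,dz$ for the canonical parameter of $P\,dz^2$ and $W_t=e^{it}W$ for that of $P_t\,dz^2$, the defining equation \eqref{eq:stokes} of a Stokes line of $P_t$ reads $\mathrm{Re}\,W_t=\mathrm{const}$; hence a finite Stokes line of $P_t$ is a segment along which $W_t$ moves in the purely imaginary direction, and is therefore a straight segment, i.e.\ a geodesic, in the flat metric $|dW_t|=|dW|$. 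Since it joins two zeros of $P$ it is \emph{short}.

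First I would record the resulting rotation dictionary. A short geodesic of $P\,dz^2$ joining $z_0$ to $z_1$ is by definition a straight segment in the $W$-plane, of direction $\theta=\arg\!\left(\int_{z_0}^{z_1}\sqrt{P(u)}\,du\right)$. Applying $W\mapsto e^{it}W$ keeps the segment straight and rotates its direction by $t$; it becomes vertical, i.e.\ $\mathrm{Re}\,W_t$ is constant along it, precisely when $t\equiv\tfrac{\pi}{2}-\theta\pmod{\pi}$. At that unique value of $t$ the segment is a finite Stokes line of $P_t$, so the same curve in the $z$-plane is simultaneously a short geodesic of $P\,dz^2$ and a short Stokes line of $P_t$. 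This produces two mutually inverse assignments: to a value $t_j$ one associates the short geodesic underlying its short Stokes line, and to a short geodesic one associates the unique $t=\tfrac{\pi}{2}-\theta\bmod\pi$ just described.

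It remains to check that these assignments are well defined and bijective, and this is where the Fedoryuk-generic hypotheses enter and constitute the only real subtlety. Condition~(a) guarantees that every singular point of $P\,dz^2$ in the finite plane is a simple zero, so the endpoints of a short geodesic are genuine turning points; condition~(b) forces each non-simple complex to contain exactly two of them, so a short Stokes line is a single segment rather than a chain; and condition~(c), at most one non-simple complex for each $t$, is exactly what prevents two distinct short geodesics from sharing the direction $\theta$ and thereby collapsing onto the same $t_j$, while conversely guaranteeing that each $t_j$ carries only one short geodesic. I would also note that no short geodesic can run off to the pole of $P\,dz^2$ at infinity, since the metric length $\int|P|^{1/2}|dz|$ diverges there, so all short geodesics indeed join finite turning points and fall under the dictionary above. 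Chaining the bijection of Proposition~\ref{pr:Fed} (accumulation rays $\leftrightarrow$ the $t_j$) with the rotation dictionary (the $t_j$ $\leftrightarrow$ short geodesics) then yields the asserted correspondence. The main obstacle is essentially bookkeeping: fixing a consistent branch of $\sqrt{P}$ and of the argument so that the single value $t\bmod\pi$ is unambiguous, and confirming that Fedoryuk-genericity rules out coincidences of direction among the finitely many periods.
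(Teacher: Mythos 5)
Your proposal is correct and takes essentially the same route as the paper: there the corollary is derived directly from Proposition~\ref{pr:Fed} together with the one-line observation that, since the canonical parameter of $P_t(z)dz^2$ is $e^{it}$ times that of $P(z)dz^2$ (so the induced flat metric is unchanged), the short Stokes lines arising in the family $P_t$ are exactly the short geodesics of $P(z)dz^2$. Your rotation dictionary, the uniqueness of $t\equiv\frac{\pi}{2}-\theta\pmod{\pi}$, and the checks via conditions (a)--(c) simply spell out the bookkeeping the paper leaves implicit.
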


The next result seems to be new. 

\begin{theorem}\label{th:estimate} For any polynomial $P(z)$ of degree $d$ the number  of  short geodesics of the quadratic differential $P(z)dz^2$ can take an arbitrary integer value in between $d-1$ and $d  \choose  2$.
\end{theorem}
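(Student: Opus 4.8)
The plan is to translate the analytic statement into flat (singular Euclidean) geometry and then into graph theory. Recall that the metric $|P(z)|^{1/2}|dz|$ induced by $P(z)dz^2$ on $\bCP$ is flat away from the singular points, has a conical singularity of angle $3\pi$ at each simple zero of $P$, and has a pole of order $d+4$ at $z=\infty$; since a pole of order $\ge 2$ lies at infinite distance, no finite-length geodesic can reach it. Hence every short geodesic is a saddle connection joining two of the $d$ finite turning points, and I encode the short geodesics as a graph $G_P$ whose vertices are the zeros of $P$ and whose edges are the short geodesics. (I work with simple zeros, the case of interest; geodesic loops based at a single zero are excluded since such a loop would have to enclose $\infty$.) The theorem then becomes: $G_P$ is a simple connected graph, so $d-1\le \#E(G_P)\le\binom d2$, and every intermediate edge-count is realizable.

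For the upper bound I would show $G_P$ has no multiple edges, i.e. two distinct zeros $z_i,z_j$ are joined by at most one short geodesic. Suppose $\gamma_1\neq\gamma_2$ both join $z_i,z_j$. A standard innermost-disk argument produces an embedded geodesic bigon $D$ bounded by subarcs of $\gamma_1,\gamma_2$, and since $\infty$ lies in exactly one complementary region I may take $D$ not to contain the pole. Gauss--Bonnet for the flat cone metric gives
\[
\alpha_1+\alpha_2 \;=\; \sum_{p\in \mathrm{int}\,D}\bigl(2\pi-\theta_p\bigr),
\]
where $\alpha_1,\alpha_2>0$ are the interior angles (at singular or transverse-crossing vertices) and the right-hand side is a sum of cone defects $2\pi-3\pi=-\pi<0$. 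The left side is positive and the right side is $\le 0$, while equality forces $\gamma_1=\gamma_2$; this contradiction gives $\#E(G_P)\le\binom d2$.

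For the lower bound I would prove $G_P$ is connected. If not, partition the zeros into nonempty sets $A,B$ with no short geodesic between them and let $\gamma$ realize the finite distance $\mathrm{dist}(A,B)$, from $p\in A$ to $q\in B$. A globally minimizing path is a local minimizer, hence a geodesic, and its interior meets no cone point: any interior cone point is a zero lying in $A$ or $B$ and would shorten the connection. Thus $\gamma$ is a saddle connection joining $A$ to $B$, i.e. a short geodesic, contradicting the assumption. Hence $G_P$ is connected and $\#E(G_P)\ge d-1$.

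Finally I would realize every value by explicit families. For the minimum I take $P$ with real, well-separated zeros $r_1<\dots<r_d$: on each interval $(r_k,r_{k+1})$ the function $P$ is real, so $W=\int\sqrt P\,dz$ sends the segment to a straight line and it is a short geodesic, while an analysis of the horizontal/vertical foliation shows these $d-1$ consecutive connections are the only ones. For the maximum I would exhibit a highly symmetric configuration with the $d$ zeros in convex position for which every pair is joined by a saddle connection (distinct edges may cross, so the ``diagonals'' are no obstruction), giving $\#E(G_P)=\binom d2$. To fill in the intermediate values I connect these two polynomials by a generic real-analytic path $P^{(s)}$ and track $G_{P^{(s)}}$: by Proposition~\ref{pr:Fed} and condition (c) a short geodesic is born or dies exactly when a non-simple Stokes complex appears, so along a generic path the edge-count changes by $\pm1$ at isolated parameters, and a discrete intermediate-value argument sweeps out all integers in $[d-1,\binom d2]$. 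The main obstacle is the maximal configuration: verifying that a single explicit $P$ carries all $\binom d2$ saddle connections requires controlling the global trajectory structure of the flat metric (not the naive Euclidean picture in the $z$-plane, since the $3\pi$ cone points create nontrivial holonomy) and dovetailing this with the bifurcation analysis so that the deformation rigorously attains each intermediate value.
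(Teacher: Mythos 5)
Your two bounds are sound and essentially coincide with the paper's argument: your Gauss--Bonnet computation for an innermost geodesic bigon is the same mechanism as the paper's use of Teichm\"uller's lemma (Lemma~\ref{lm:2roots}), and your distance-minimizing argument for connectedness is in fact a more careful justification of the paper's terse claim that the union of short unbroken geodesics is connected. The genuine gap is in the realization half, which is the actual content of the theorem, and you have flagged it yourself: you never construct a polynomial with all $\binom d2$ saddle connections, and your interpolation step is unjustified. Nothing you cite supports the claim that the edge-count changes by $\pm 1$ along a generic path in coefficient space: Proposition~\ref{pr:Fed} concerns the rotation family $P_t(z)=e^{2t\sqrt{-1}}P(z)$ of a \emph{fixed} potential and the accumulation rays of \eqref{eq:anal}; it says nothing about how the set of short geodesics bifurcates as the coefficients of $P$ themselves are deformed. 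Along such a deformation a short geodesic is created or destroyed when it collides with a third turning point, and controlling that such events are isolated, that the count jumps by exactly one at each, and that no other degenerations occur would require a bifurcation analysis you only gesture at. So the discrete intermediate-value argument, and with it every value of $k$ strictly between your two (one unproven) endpoints, is not established.

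The idea you are missing is the paper's inversion of the problem via Lemma~\ref{lm:strip}: rather than exhibiting explicit polynomials and then fighting to control the global trajectory structure of their flat metrics (your ``main obstacle''), the paper \emph{prescribes} the flat geometry first. A ``very flat'' differential is encoded by a chopped vertical strip --- $d$ marked points with $d-2$ vertical cuts --- and conversely every chopped strip is realized by some very flat quadratic differential $P(z)dz^2$ (glue back the half-plane domains and invert $\xi(z)$; this rests on the correspondence with weighted chord diagrams in Proposition~\ref{pr:bar}). In the strip, counting short geodesics becomes elementary Euclidean geometry: they are exactly the straight segments between marked points avoiding the cuts. In particular the maximal count $\binom d2$, which you could not verify for any explicit potential, becomes trivial: place the $d$ points in convex position and send every cut to the side not met by the connecting segments. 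The intermediate values are then produced not by a homotopy argument but by the direct induction of Proposition~\ref{pr:realiz}: appending a $d$-th point behind a blocking cut adds exactly one geodesic to any configuration realizing $k'$ for degree $d-1$, covering $k\le\binom{d-1}2+1$, while a single extra cut through a convex configuration kills any number of geodesics between $0$ and roughly $(d-1)^2/4$, covering the top range up to $\binom d2$. Replacing your unproven maximal configuration and $\pm1$-bifurcation claim with this synthetic realization lemma is what closes the proof.
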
Ê

\medskip
\noindent 
          {\it Acknowledgements.}    I am deeply grateful  to Professors Y.~Baryshnikov,  A.~Eremenko, A.~Gabrielov, S.~Giller and A.~Zorich for important discussions around this topic. I want especially to thank my Ph.D. student T.~Holst for his patience and help with several details.

\section{Proofs}
\label{sec:asymp}

\begin{definition} Let $\Psi$ be a quadratic differential on $\bC P^1$. A $\Psi$-polygon is
a simple closed curve consisting of a finite number of (possibly singular)
geodesics of finite length.
\end{definition}Ê

Let $\Gamma$ be a $\Psi$-polygon with an interior domain $D$ and assume that $z_j$ are
the singular points on $\Psi$ of orders $n_j$ and that $\xi_i$ are the singular points
inside $D$ of orders $n_i$. Moreover, let the interior angles at the vertices of $\Gamma$ 
be denoted by $\theta_j,  0 \le  \theta_j \le  2\pi$. The following  result holds, see e.g. Theorem 14.1 of \cite{Str}.

\begin{theorem} [Teichm\"uller's lemma]  One has
\begin{equation}\label{eq:Teich}
\sum_j\left(1-\frac{(n_j+2)\theta_j}{2\pi}\right)=2+\sum_in_i.
\end{equation}
\end{theorem}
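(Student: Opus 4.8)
The plan is to recognize the identity \eqref{eq:Teich} as the Gauss--Bonnet theorem applied to the flat metric with conical singularities determined by $\Psi$. Recall that the canonical length element $|dw|=|f(z)|^{1/2}|dz|$ equips $\Gamma\setminus Sing_\Psi$ with a flat (zero curvature) metric, since in the distinguished parameter $W=\int\sqrt{f(z)}\,dz$ it is simply the Euclidean $|dW|$. The domain $D$ bounded by the $\Psi$-polygon is a topological disk, so $\chi(D)=1$; its sides are geodesics of this metric (straight segments in the $W$-plane) and hence carry zero geodesic curvature. All the curvature of the metric is therefore concentrated at the singular points, and \eqref{eq:Teich} should emerge by balancing the concentrated curvature at the interior points $\xi_i$ against the turning of the boundary at the corners $z_j$.

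First I would carry out the local analysis at a singular point of order $n$. Writing $f(z)=z^n(c+o(1))$ there, the distinguished parameter satisfies $W\sim \mathrm{const}\cdot z^{(n+2)/2}$, so a full coordinate loop of angle $2\pi$ around the point is opened into an angle $(n+2)\pi$ in the $W$-plane; that is, the cone angle of the metric at an order-$n$ singularity equals $(n+2)\pi$. Consequently the concentrated Gaussian curvature (the angle deficit $2\pi$ minus the cone angle) at each interior singular point $\xi_i$ is $2\pi-(n_i+2)\pi=-\pi n_i$. The same local model controls the corners: a boundary vertex $z_j$ of order $n_j$ at which $D$ subtends the coordinate angle $\theta_j$ subtends the metric angle $\hat\theta_j=\tfrac{n_j+2}{2}\,\theta_j$, so the exterior (turning) angle of $\partial D$ there is $\epsilon_j=\pi-\hat\theta_j=\pi\bigl(1-\tfrac{(n_j+2)\theta_j}{2\pi}\bigr)$. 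This is the step I expect to be the main obstacle: one must carefully distinguish the analytic angle $\theta_j$ measured in the $z$-coordinate (for which $0\le\theta_j\le 2\pi$) from the metric angle seen after developing by $w\sim z^{(n_j+2)/2}$, and verify that the turning of a geodesic boundary across a vertex which is \emph{simultaneously} a conical point is given by $\pi-\hat\theta_j$.

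With the local contributions in hand, I would invoke Gauss--Bonnet with corners for the disk $D$: since the interior is flat away from the $\xi_i$ and the sides are geodesic, the total concentrated curvature plus the total turning equals $2\pi\chi(D)=2\pi$, i.e.
\begin{equation*}
\sum_i(-\pi n_i)+\sum_j \epsilon_j=2\pi.
\end{equation*}
Substituting $\epsilon_j=\pi\bigl(1-\tfrac{(n_j+2)\theta_j}{2\pi}\bigr)$ and dividing by $\pi$ yields exactly $\sum_j\bigl(1-\tfrac{(n_j+2)\theta_j}{2\pi}\bigr)=2+\sum_i n_i$, which is \eqref{eq:Teich}. A final check is needed for singular points of negative order (poles), where the cone angle $(n+2)\pi$ can be small or degenerate; here the requirement in the definition of a $\Psi$-polygon that every side be a geodesic of finite length guarantees that the boundary pieces and the developing map behave well, so the accounting applies verbatim. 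An equivalent, purely complex-analytic route would replace the Gauss--Bonnet step by the argument principle, computing the total turning $\sum_j\epsilon_j$ as the variation of $\arg dW$ along $\partial D$ and evaluating it through the zeros and poles of $f$ enclosed by $\Gamma$; I would keep the metric formulation as the primary argument since it makes the geometric meaning of each term transparent.
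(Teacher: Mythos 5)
Your proof is correct, but be aware that the paper contains no proof of this statement at all: it is quoted from Strebel, with a pointer to Theorem 14.1 of \cite{Str}. Strebel's own argument is the complex-analytic one you mention only in passing at the end: one computes the total variation of $\arg f$ around $\Gamma$, which by the argument principle equals $2\pi\sum_i n_i$, and compares it with the turning of the tangent along the geodesic sides (where $\arg\bigl(f(z)\,dz^2\bigr)$ is constant, so $d\arg f=-2\,d\arg dz$) and with the jumps at the corners. Your Gauss--Bonnet formulation is the metric counterpart of that computation, and all your local ingredients check out: the metric $|f|^{1/2}|dz|$ is flat off $Sing_\Psi$ because the distinguished parameter $W$ is a local isometry onto the Euclidean plane; a singularity of order $n$ has cone angle $(n+2)\pi$ and hence concentrated curvature $-n\pi$; a vertex with coordinate angle $\theta_j$ develops to metric angle $\tfrac{n_j+2}{2}\theta_j$, giving exterior angle $\pi\bigl(1-\tfrac{(n_j+2)\theta_j}{2\pi}\bigr)$; and Gauss--Bonnet with corners on the disk $D$ then yields \eqref{eq:Teich} exactly as you write. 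The one spot where your route is genuinely weaker than the argument-principle version is the caveat you flag yourself: for interior poles of order $n_i\le -2$ the phrase ``cone angle $(n_i+2)\pi$'' degenerates (logarithmic development at $n_i=-2$, formally negative angle below), so the concentrated-curvature bookkeeping needs separate justification there, whereas the residue computation handles all orders uniformly. In this paper's application the issue is vacuous: for $\Psi=P(z)dz^2$ with $P$ polynomial the only pole is at $\infty$, which lies outside the bounded component $D$ (this is precisely how Lemma~\ref{lm:2roots} uses the statement), so every interior singularity is a zero. What your approach buys is transparency --- each term of \eqref{eq:Teich} gets a geometric meaning as curvature or turning --- at the cost of invoking Gauss--Bonnet with cone points and corners; Strebel's computation is more elementary and covers the meromorphic case without special pleading.
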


Consider now a quadratic differential $\Psi=P(z)dz^2$ on $\bC P^1$ where
$P(z)$ is a polynomial of degree $d$. We want to count the total number of unbroken
geodesics connecting pairs of roots of $P(z)$.  (By 'an unbroken' geodesic we mean a geodesic not passing through other roots of $P(z)$ except the two chosen.)

\begin{lemma}\label{lm:2roots}
For any given pair of roots of $P(z)$ there exists at most $1$ unbroken geodesic connecting them.\end{lemma}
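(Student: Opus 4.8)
The plan is to argue by contradiction using Teichm\"uller's lemma \eqref{eq:Teich}. Suppose two \emph{distinct} unbroken geodesics $\gamma_1,\gamma_2$ join a pair of roots $z_1,z_2$ of $P(z)$. First I would record that the only singular points of $\Psi=P(z)dz^2$ on $\bCP$ are the roots of $P$ (finite, of orders $\ge 1$) together with a single pole at $z=\infty$; since both geodesics are unbroken and of finite length, their interiors meet no singular point of $\Psi$. A geodesic is straight in the canonical parameter and can change direction only at a singular point, so $\gamma_1$ and $\gamma_2$ cannot share a subarc of positive length (having no singularity to separate them, they would otherwise coincide). Hence they meet in isolated points only, and I may select an \emph{innermost} bigon $B$: the region bounded by a subarc of $\gamma_1$ and a subarc of $\gamma_2$ running between $z_1$ and some common point $q$, chosen so that the open region $B$ contains no other point of $\gamma_1\cap\gamma_2$.

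Next I would apply Teichm\"uller's lemma to the $\Psi$-polygon $\partial B$. Its two vertices are $z_1$, of order $n_1\ge 1$, and $q$, the latter being either the second root $z_2$ (order $\ge 1$) or a regular crossing point (order $0$); in every case both vertex orders are nonnegative. Because $B$ is a bounded planar region, the only pole $\infty$ lies in its exterior, so each singular point interior to $B$ is a zero and contributes a nonnegative term to the right-hand side of \eqref{eq:Teich}; thus that side is at least $2$. On the other hand, writing $\theta_1,\theta_q$ for the interior angles and $n_q$ for the order at $q$, the left-hand side equals $2-\frac{1}{2\pi}\bigl((n_1+2)\theta_1+(n_q+2)\theta_q\bigr)\le 2$, since every summand is nonnegative. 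Comparing the two sides forces equality throughout: there are no interior singular points, and $\theta_1=\theta_q=0$.

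Finally I would convert the vanishing of the interior angles into the desired contradiction. A zero interior angle at a vertex means that the two bounding geodesic arcs leave that vertex in the same direction. At a zero of $\Psi$ the canonical length element has a cone singularity of total angle $(n_j+2)\pi$, and a geodesic emanating from such a cone point is uniquely determined by its initial direction; hence the two arcs coincide near the vertex and, meeting no singularity, coincide all the way to $q$. This contradicts the nondegeneracy of $B$, and the lemma follows.

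The step I expect to be the main obstacle is this last one: making precise that a vanishing interior angle at a higher-order zero (a cone point of total angle exceeding $2\pi$) genuinely forces the two geodesics to coincide, rather than merely to be tangent along different sheets of the cone. This requires the uniqueness of the geodesic ray issuing in a prescribed direction from a cone point, together with the fact --- already used to extract the innermost bigon --- that, absent an intervening singular point, a geodesic continues in the canonical parameter as a single straight segment and therefore cannot split.
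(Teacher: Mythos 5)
Your proof is correct and follows essentially the same route as the paper: apply Teichm\"uller's lemma \eqref{eq:Teich} to the $\Psi$-polygon bounded by the two geodesics, note that the right-hand side is at least $2$ because the only pole lies at $\infty$ outside the bounded region, and derive a contradiction from the left-hand side. In fact your version is more careful than the paper's one-line argument, which silently assumes the two geodesics form a simple closed curve and that the interior angles are strictly positive (so that the left-hand side is \emph{strictly} below $2$); your innermost-bigon reduction and your treatment of the equality case $\theta_1=\theta_q=0$ via uniqueness of the geodesic ray issuing from a cone point in a given direction fill exactly those two elided steps.
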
 

\begin{proof} Follows straightforwardly from Teichm\"uller's lemma. Indeed, if there were two such geodesics then they will form a $\Psi$-polygon $\Gamma$ splitting  $\bC P^1$ into two connected domains. Let $D$ be the "bounded domain", i.e. $D$ is the component of $\bC P^1\setminus \Gamma$  not containing $\infty\in \bC P^1$. By assumption there is exactly two singular points on $\Gamma$ and so the left-hand side of \eqref{eq:Teich}  is smaller than  $2$, whereas the
right-hand side is clearly at least $2$, a contradiction. 
\end{proof} 

This lemma immediately implies the inequalities in Theorem~\ref{th:estimate}. Namely, 

\begin{corollary}Ê
A arbitrary quadratic differential $\Psi=P(z)dz^2,\; \deg P(z)=d$  has at least $d-1$ and most $\binom d 2$ unbroken short geodesics.
\end{corollary}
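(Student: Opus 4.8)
The plan is to encode both estimates in the graph $G$ whose vertex set is the set of roots of $P(z)$ and whose edge set is the collection of unbroken short geodesics of $\Psi=P(z)dz^2$; I work under the (generic) hypothesis that $P$ has $d$ distinct simple roots, so that $G$ has exactly $d$ vertices and, for the canonical length element $|dw|=|P(z)|^{1/2}|dz|$, each root is a cone point of angle $3\pi$. The upper bound is then nothing more than Lemma~\ref{lm:2roots}: there are $\binom d2$ pairs of roots and each pair supports at most one unbroken geodesic, so $G$ has at most $\binom d2$ edges. For the lower bound I would prove that $G$ is connected, since a connected graph on $d$ vertices has at least $d-1$ edges.

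To establish connectedness I would argue by contradiction using the flat metric $\mathrm{dist}$ attached to $|dw|$, recalling that its geodesics are exactly the straight segments in the canonical parameter $W=\int\sqrt{P}\,dz$. Suppose $G$ has components with vertex sets $C_1,\dots,C_r$, $r\ge 2$, and set $\delta=\min_{i\ne j}\mathrm{dist}(C_i,C_j)$, realized by clusters $A,B$ and roots $a\in A$, $b\in B$ with $\mathrm{dist}(a,b)=\delta$. Choosing a shortest path $\gamma$ from $a$ to $b$ (of length $\delta$), the crux is to show that the interior of $\gamma$ contains no other root. Indeed, if it passed through a root $z_k\ne a,b$ lying in a cluster $C_\ell$, then $0<\mathrm{dist}(a,z_k)<\delta$ and $0<\mathrm{dist}(z_k,b)<\delta$, and each of the cases $C_\ell=A$, $C_\ell=B$, $C_\ell\notin\{A,B\}$ would force $\mathrm{dist}(C_i,C_j)<\delta$ for some $i\ne j$, contradicting the minimality of $\delta$. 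Hence $\gamma$ is an unbroken short geodesic joining $A$ to $B$, so $A$ and $B$ lie in the same component of $G$, a contradiction. Thus $G$ is connected and carries at least $d-1$ short geodesics.

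The step I expect to require the most care is the very existence of the minimizing geodesic $\gamma$, together with the assertion that a shortest path may not run through a cone point even though all cone angles here exceed $2\pi$ (so that, a priori, a geodesic would be permitted to bend at a root). Existence is handled by a compactness argument: since the conformal density $|P|^{1/2}$ blows up at infinity, any path leaving a large disc $\{|z|\le R\}$ is longer than a fixed reference path, so a length-minimizing sequence stays in a fixed compact set and Arzel\`a--Ascoli produces $\gamma$. The avoidance of interior roots is then not a local statement about cone angles at all, but precisely the global minimality argument above, which is what makes the negative curvature concentrated at the roots harmless. Finally, combining the two bounds gives that $\Psi$ has between $d-1$ and $\binom d2$ unbroken short geodesics, as claimed.
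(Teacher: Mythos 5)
Your proof is correct and follows essentially the same route as the paper: the upper bound is exactly Lemma~\ref{lm:2roots} applied to the $\binom d 2$ pairs of roots, and the lower bound comes from connectedness of the graph whose vertices are the (assumed distinct) roots and whose edges are the unbroken short geodesics, a connected graph on $d$ vertices having at least $d-1$ edges. The only difference is one of rigor: where the paper merely asserts that any two roots are joined by at least one geodesic (broken or unbroken) and that the resulting set is connected, you supply the details it glosses over --- existence of minimizers via the blow-up of $|P|^{1/2}$ at infinity and Arzel\`a--Ascoli, and the nearest-two-components argument showing the minimizing path avoids interior roots and is therefore a genuine unbroken short geodesic between distinct components.
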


\begin{proof}  The upper bound $d\choose  2$ is provided by the latter lemma. Moreover, 
it is clear that the  set formed by all short unbroken geodesics is compact and connected. Indeed,  any two roots of $P(z)$ are connected by  at least one  geodesic (broken or unbroken). Then assuming that all $d$ roots of $P(z)$ are distinct one needs  at least $d-1$ unbroken geodesic to guarantee that this set is connected, which can be realized for instance by assuming that  $P(z)$ has only real  roots. (See more examples below.)
\end{proof} 

Let us now show that there exist polynomials $P(z)$ having an arbitrary number of unbroken geodesics between $d-1$ and $d \choose 2$. We will use the interpretation of the quadratic differential $P(z)dz^2$ where $P(z)$ is a monic polynomial of degree $d$ with the vanishing coefficient at $z^{d-1}$ (i.e. with the vanishing sum its roots) in terms of a pair of weighted chord diagrams on $d+2$ vertices, see \S~4 of \cite{Bar}.  

The main construction is as follows. Take $\Psi=P(z)dz^2$ as above and take its Stokes graph $\ST_P$ and anti-Stokes graph $\AST_P$. (The anti-Stokes graph of $P(z)dz^2$ is by definition the Stokes graph of $-P(z)dz^2$.) Unbounded Stokes lines of $\ST_P$ tend 
at $\infty$ to $d+2$ standard  directions (called the Stokes rays) and analogously unbounded anti-Stokes lines of $\AST_P$ tend at $\infty$ to $d+2$ standard  directions (called the anti-Stokes rays). The set of all Stokes  and resp. anti-Stokes rays naturally form the set of vertices of two regular $(d+2)$-gons which are rotated w.r.t. each other by the angle $\frac{\pi}{d+2}$.  As we mentioned above admissible domains, i.e. connected components of $\bC P^1\setminus \ST_P$ (resp. $\bC P^1\setminus \ST_P$) are of two types: the half-plane type and the strip-type. The half-plane type components are mapped by the function $\xi(z)$, see {eq:stokes} into half-planes and the strip type to strips. For $\bC P^1\setminus \ST_P$ these half-planes and strips are bounded by the vertical lines and for $\bC P^1\setminus \AST_P$ by the horizontal lines.  Each strip-type domain is topologically an infinite strip bounded by two curves. Moreover,  the two  pairs of ends of its boundary approach two distinct and non-neighboring Stokes rays. Thus, one obtains that any strip-type domain connects two vertices of the Stokes  $(d+2)$-gon and  one can represent this strip-type domain by the corresponding chord in the Stokes $(d+2)$-gon. Analogously, strip-type domains for the anti-Stokes graph connect pairs of vertices  of the anti-Stokes $(d+2)$-gon. Moreover, we can assign to each strip-type domain a positive weight coinciding by definition  with the width of its image under the map $\xi(z)$. 
(Those weights are the real and imaginary parts of the integrals $\int \sqrt{P(t)}dt$ taken over certain paths connecting pairs of roots of $P(z)$ and are closely related to the periods of $y^2=P(z)$.) For a generic polynomial $P(z)$ the number of its strip-type domains for $\ST_P$ and $\AST_P$ equals $d-1$ which means that one gets weighted triangulations of both the Stokes and anti-Stokes polygons. In other words, one obtains a pair of weighted chord diagrams of the Stokes and anti-Stokes $(d+2)$-gons. The following statement can be found in \S 4 of \cite{Bar}. 

\begin{proposition}Ê\label{pr:bar}
The above procedure gives a $1-1$-correspondence between the set of all quadratic differentials of the form $P(z)dz^2$ with $P(z)=z^d+a_1z^{d-2}+...+a_{d-1}$ and 
the set of ordered pairs of weighted chord diagrams.  
\end{proposition}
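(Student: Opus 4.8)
The plan is to recognize the map of the proposition as a \emph{period map} and to establish bijectivity cell-by-cell. Write $\Phi$ for the assignment sending a generic $\Psi=P(z)dz^2$ (with $P$ monic of degree $d$ and vanishing coefficient at $z^{d-1}$) to the ordered pair of weighted triangulations extracted from $\ST_P$ and $\AST_P$. I would first make $\Phi$ precise on the generic stratum: by the trajectory structure recalled above, $\bCP\setminus\ST_P$ consists of half-plane domains together with exactly $d-1$ strip domains, and recording each strip as a chord of the Stokes $(d+2)$-gon yields a triangulation (the chords are disjoint, hence non-crossing, and there are $d-1$ of them, which is exactly the number of diagonals in a triangulation of a $(d+2)$-gon), weighted by the widths of the strips under $\xi(z)$; the same construction on $\AST_P$ produces the second weighted triangulation. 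The key identification is that the $\ST_P$-widths are the real parts, and the $\AST_P$-widths the imaginary parts, of the relative periods $\int_\gamma\sqrt{P}\,dz$, where $\gamma$ ranges over the saddle connections crossing the strips; thus the two weight vectors together constitute the full collection of relative periods of the abelian differential $\omega=\sqrt{P}\,dz$ on the hyperelliptic curve $y^2=P(z)$.

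Next I would run a dimension count to organize the bijection. The space of admissible $P$ has complex dimension $d-1$ (the free coefficients $a_1,\dots,a_{d-1}$), hence real dimension $2(d-1)$; a single weighted triangulation carries $d-1$ positive real weights, so an ordered pair carries $2(d-1)$ of them, matching exactly. I would then invoke the standard fact that the relative periods furnish local holomorphic coordinates on the stratum of meromorphic differentials of this type (period coordinates on a translation surface). Consequently, for each fixed pair of combinatorial types---a triangulation of the Stokes polygon and one of the anti-Stokes polygon---the locus of polynomials realizing that pair is an open cell on which $\Phi$ is a local homeomorphism onto an open subset of $\bR_{>0}^{d-1}\times\bR_{>0}^{d-1}$. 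The normalization to monic polynomials with vanishing $a_{d-1}$ removes the continuous affine freedom $z\mapsto az+b$, while the residual finite rotation symmetry (by $(d+2)$-th roots of unity) is absorbed into the labeling of the $d+2$ marked vertices of the Stokes and anti-Stokes polygons; thus the flat structure is pinned down rigidly and $\Phi$ is locally injective.

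To promote this to a global statement I would construct an explicit inverse by \emph{gluing}. Given an arbitrary ordered pair of weighted triangulations, one assembles a flat surface from Euclidean half-planes and bi-infinite strips of the prescribed widths, glued along their boundary saddle connections according to the two chord diagrams; the resulting translation surface carries a meromorphic quadratic differential with $d$ simple zeros and a single pole of order $d+4$ at one point, hence is biholomorphic to $(\bCP,P(z)dz^2)$ for a unique normalized $P$. This gives surjectivity onto the full orthant for each fixed combinatorial type. Since the combinatorial type is itself part of the datum of $\Phi$, distinct cells are never confused, and within each connected cell a proper local homeomorphism onto the simply connected orthant is a homeomorphism; assembling the cells yields the asserted $1$--$1$ correspondence with \emph{all} ordered pairs.

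The step I expect to be the main obstacle is the consistency of the reverse gluing: the $\ST_P$ and $\AST_P$ strip widths are the real and imaginary parts of the \emph{same} relative periods, so it is not a priori clear that a triangulation and its weights may be prescribed independently on the two sides without over-determining the surface. This is exactly where the count $(d-1)+(d-1)=2(d-1)=\dim_{\bR}\{P\}$ does the essential work: it certifies that the real and imaginary data are free and independent, so that the gluing is always solvable and produces a valid differential on $\bCP$ rather than a surface of the wrong genus or with the wrong singularity type. Verifying that the glued surface is always the sphere carrying a single pole of order $d+4$---equivalently, that the combinatorial gluing has the correct Euler characteristic and cone-angle data, which can be checked against Teichm\"uller's lemma~\eqref{eq:Teich}---is the technical heart of the argument.
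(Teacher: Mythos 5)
First, a point of reference: the paper contains no proof of this statement at all --- Proposition~\ref{pr:bar} is quoted from \S~4 of \cite{Bar} --- so your argument must stand on its own, and it has a genuine gap at exactly the step you flag. You write that the count $(d-1)+(d-1)=2(d-1)=\dim_{\bR}\{P\}$ ``certifies that the real and imaginary data are free and independent, so that the gluing is always solvable.'' It certifies no such thing: equality of dimensions is necessary but never sufficient for surjectivity or for solvability of a gluing problem. What is actually needed is that the mixed period map $P\mapsto\bigl(\mathrm{Re}\int_{\gamma_i}\sqrt{P}\,dz,\;\mathrm{Im}\int_{\delta_j}\sqrt{P}\,dz\bigr)$ be nondegenerate, where the relative cycles $\gamma_i$ dual to the strips of $\ST_P$ and the cycles $\delta_j$ dual to the strips of $\AST_P$ are two \emph{different} bases of relative homology. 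Period coordinates give you that the full complex period vector over a \emph{single} basis is a local coordinate; taking real parts along one basis and imaginary parts along another could a priori be degenerate, and ruling this out is a genuine transversality/positivity statement, not bookkeeping. The same applies to properness of $\Phi$ on each cell, which your covering-space conclusion (``a proper local homeomorphism onto the simply connected orthant is a homeomorphism'') requires but which you assert without any control of the degenerations where a weight tends to $0$ or $\infty$ (a saddle connection forms and the combinatorial type changes).

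Second, the ``explicit inverse by gluing'' is not well-defined as described. To assemble a flat surface from half-planes and strips one must prescribe full complex gluing data --- both the widths and the translational offsets along each boundary identification, i.e.\ real \emph{and} imaginary parts of the periods over one triangulation. Your datum supplies the real parts along $T_1$ and the imaginary parts along a different triangulation $T_2$, and there is no canonical way to glue ``according to the two chord diagrams'' simultaneously; this is precisely the overdetermination worry you yourself raise, and it is resolved nowhere in the proposal. Consequently surjectivity --- including the nonemptiness of every combinatorial cell, i.e.\ that every ordered pair of triangulations actually occurs for some polynomial --- remains unproven, as does the connectivity of each cell, which your argument also uses. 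Finally, the proposition as stated is a bijection onto \emph{all} ordered pairs of weighted chord diagrams, including degenerate ones with fewer than $d-1$ chords arising from non-generic $P$ (short Stokes lines, multiple roots); your cell-by-cell scheme treats only the generic stratum of triangulations, so even granting all the above it would prove a weaker statement than the one claimed.
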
Ê

To find quadratic differentials $P(z)dz^2$ with a given number $k$ of short geodesics satisfying the inequality $d-1\le k \le {d \choose 2}$ we will only use a special class of quadratic differentials.  

\begin{definition}ÊA quadratic differential $P(z)dz^2,\; \deg P(z)=d$ is called {\em very flat}Ê if a) all roots of $P(z)$ are simple; \quad b) the number of its horizontal strip-type domains equals $d-1$; \quad c) each root of $P(z)$ lies in the closure of at most $2$ strip-type domains. 
\end{definition} 
 
Notice that condition c) above is equivalent to the following property of its Stokes chord diagram. Each domain obtained by removing the diagram from the Stokes $(d+2)$-gon has at most $2$ neighbors, see below. 

To each very flat quadratic differential we can naturally associate the following object whose versions earlier appeared in a number of books discussing the WKB-problems, see e.g. \cite {Sib}, p. 269. 

\begin{definition}Ê{\em A chopped vertical strip} is a set of complex numbers $z_1, z_2, ...., z_d$ where $z_j=x_j+\sqrt{-1}y_j,\; j=1,...,d$ with distinct real and imaginary parts and ordered by their real parts, i.e. $x_1<x_2<...<x_d$ together with a vertical ray going up or down at each $z_j,\; j=2,...,d-1$, see Fig.~1.   These rays are called the 
{\em cuts} of the strip. 
\end{definition}

\begin{figure}

\begin{center}
\includegraphics[scale=0.20]{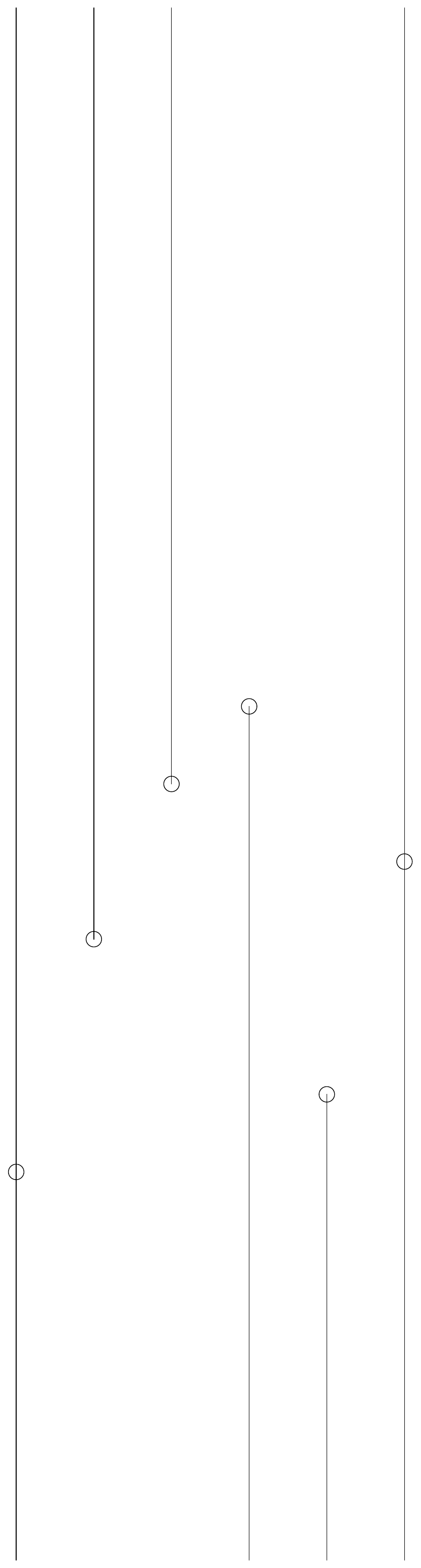}  \quad \quad  \quad  \quad  \quad\includegraphics[scale=0.20]{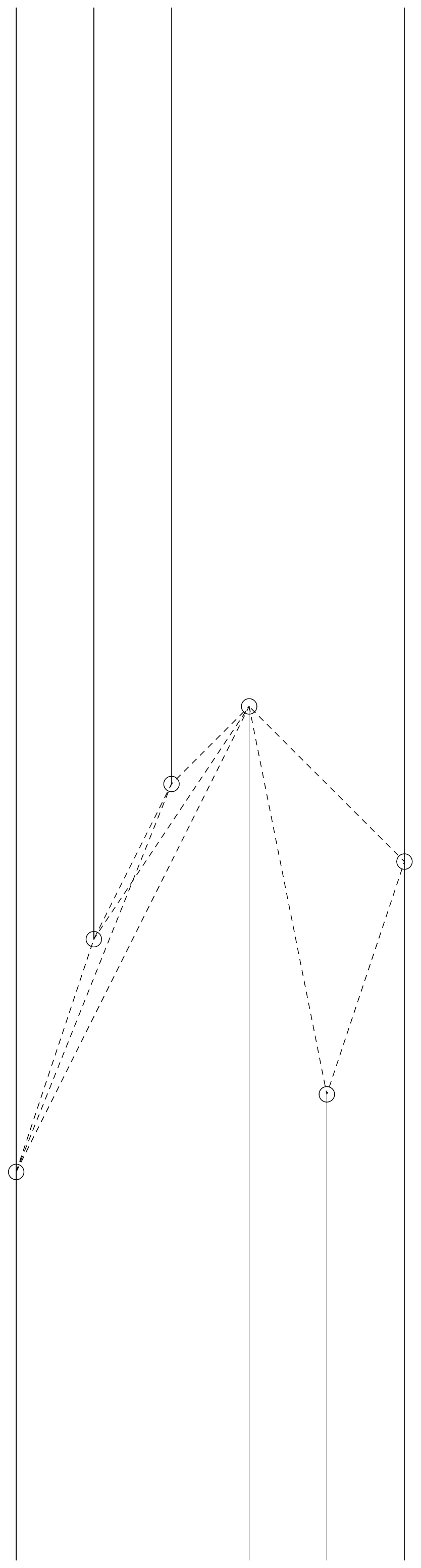} 
\end{center}

\caption{A chopped strip and the same strip with short geodesics.}
\label{fig1}
\end{figure}

The map associating to a very flat quadratic differential its chopped vertical strip is the same map $\xi(z)$ which we introduced earlier whose domain we restrict to 
$\bC P^1\setminus \bigcup_{j=1}^{d+2} O_j$ where $O_j$ is the closed $j$-th half-plane type domain. (It is usually convenient to assign the integration base point to the inverse image of $z_1$ which implies  $z_1=0$.)  

\begin{lemma}\label{lm:strip}
For any chopped strip there exists a (non-unique) very flat quadratic differential $P(z)dz^2$ mapped to this strip. Its short geodesics are in $1-1$-correspondence with straight segments connecting pairs of vertices of the strip and not intersecting its cuts. 
\end{lemma}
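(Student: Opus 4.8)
The plan is to prove the two assertions of the lemma separately: first the \emph{existence} of a very flat quadratic differential inducing a prescribed chopped strip, and then the stated $1-1$ correspondence between its short geodesics and the admissible segments of the strip.

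For existence I would reverse the construction $\xi(z)$ described just before the statement. A chopped strip $\Sigma$ is nothing but a piece of the $w$-plane carrying the trivial flat quadratic differential $dw^2$, namely the full plane slit along the $d-2$ vertical cuts. I would reconstruct $\bC P^1$ together with its quadratic differential by gluing $d+2$ half-plane type domains back onto $\Sigma$: one half-plane is folded along each of the $d-2$ cuts so that its boundary line is identified with the two sides of the cut, and four further half-planes are attached at the four ends of $\Sigma$ (upper/lower, left/right) where $\operatorname{Re} w\to\pm\infty$, giving $d-2+4=d+2$ in total. A routine local computation shows that folding a half-plane at the tip $z_j$ of a cut raises the total cone angle there from $2\pi$ to $3\pi$, and that the four end-gluings do the same at the extremal vertices $z_1,z_d$; hence every vertex $z_1,\dots,z_d$ becomes a cone point of angle $3\pi$, i.e. a \emph{simple} zero, while the point at infinity becomes a pole of order $d+4$. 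The total cone-angle balance is exactly the Gauss--Bonnet/Teichm\"uller relation \eqref{eq:Teich}, which I would invoke as a consistency check. The resulting flat sphere carries a unique conformal structure turning it into $\bC P^1$, and in any global coordinate $z$ the differential $dw^2$ pulls back to a meromorphic quadratic differential with $d$ simple zeros and a single pole of order $d+4$ at $\infty$; such a differential is necessarily of the form $c\,P(z)dz^2$ with $\deg P=d$ and $P$ having simple roots. By construction $\xi$ maps it back to $\Sigma$, and the very-flatness conditions hold: the roots are simple, the strip-type domains of $\ST_P$ are the $d-1$ regions cut out by the $d-2$ cuts so their number is $d-1$, and each root lies on the closure of at most two of them because it is the tip of at most one cut. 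The \emph{non-uniqueness} comes precisely from the freedom in the gluing at the ends (equivalently, in the choice of a compatible anti-Stokes chord diagram in Proposition~\ref{pr:bar}): the chopped strip only records the horizontal/Stokes data, and inequivalent completions produce genuinely different $P$ with the same $\xi$-image.

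For the geodesic correspondence I would use that in the canonical parameter $w$ the geodesics of $P(z)dz^2$ are ordinary straight segments. A short geodesic of $P(z)dz^2$ joins two turning points, hence under $\xi$ it develops into a straight segment joining the corresponding vertices $z_i,z_j$ of $\Sigma$; since it joins two finite cone points and is length-minimizing it never enters a half-plane type domain (whose only singular point is $\infty$), so it stays inside $\Sigma$ and therefore avoids all the cuts. Conversely, a straight segment between two vertices $z_i,z_j$ that meets no cut lies entirely in the flat region $\Sigma$, is automatically a geodesic, and joins two turning points, so it is realized by a short geodesic; that this geodesic is unbroken and unique for the pair $\{z_i,z_j\}$ follows from the distinctness of the coordinates of the $z_j$ and from Lemma~\ref{lm:2roots}.

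The step I expect to be the main obstacle is the rigorous identification of the glued flat surface with $\bC P^1$ equipped with a genuinely \emph{polynomial} quadratic differential, i.e. verifying that the four end-gluings produce exactly one pole of order $d+4$ (and not several poles, or the wrong order), that no spurious identifications are forced, and that the cone angles are \emph{exactly} $3\pi$. Here the bookkeeping of the $d+2$ half-planes together with Teichm\"uller's lemma \eqref{eq:Teich} are the decisive tools; everything else is a matter of checking that the flat-to-complex passage and the three very-flatness conditions are respected by the construction.
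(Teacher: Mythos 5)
Your route is the one the paper intends --- its entire proof of this lemma is the one-liner ``throw away all half-plane domains and use $\xi(z)$'', i.e.\ exactly your inversion of $\xi$ by regluing half-plane type domains onto the flat slit picture --- and your second half (the geodesic correspondence) is essentially sound: an unbroken short geodesic is straight in the canonical parameter, a straight segment can meet the boundary line of a half-plane domain at most once (unless it runs along it), so it can never enter and leave a half-plane domain and must develop into a straight segment of $\Sigma$ avoiding the cuts, and conversely. (One small slip there: distinctness of the real and imaginary parts of the $z_j$ does \emph{not} prevent three vertices from being collinear; what actually excludes a segment passing through a third vertex is that such a segment would hit the tip of that vertex's cut --- the tip belongs to the cut --- while the extremal vertices $z_1,z_d$ cannot lie in the interior of a segment because their real parts are extreme.)

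The existence half, however, contains a genuine error at precisely the step you flagged. If $\Sigma$ is, as you write, the \emph{full} plane slit along the $d-2$ cuts, then $z_1$ and $z_d$ are interior points of $\Sigma$ with flat neighborhoods of total angle $2\pi$, and attaching half-planes ``at the four ends where $\operatorname{Re} w\to\pm\infty$'' is a modification at infinity: it cannot create cone points at the finite points $z_1,z_d$. As written, your glued surface has only $d-2$ simple zeros (the cut tips), so you reconstruct a differential of degree $d-2$, not $d$, and the count of half-plane domains and the pole order at $\infty$ then fail as well. The repair is to treat $z_1$ and $z_d$ exactly like the interior vertices: slit $\Sigma$ along one vertical ray from each of them (direction chosen arbitrarily --- this is part of the non-uniqueness) and fold a half-plane into each of the resulting $d$ slits. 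Then every $z_j$ has cone angle $2\pi+\pi=3\pi$, hence is a simple zero; the Stokes graph consists of the $d$ full vertical lines through the $z_j$, so the visible plane splits into $d-1$ strip domains and two outer half-planes, which together with the $d$ folded ones give the required $d+2$ half-plane domains; and the two auxiliary slits do not disturb the correspondence, since a straight segment between vertices lies in $x_1\le \operatorname{Re} w\le x_d$ and can meet the lines $\operatorname{Re} w=x_1$, $\operatorname{Re} w=x_d$ only at $z_1$, $z_d$ themselves. (Equivalently, take $\Sigma$ to be the vertical strip $x_1\le\operatorname{Re} w\le x_d$ and chain \emph{two} half-planes at each side, glued to the two boundary rays at $z_1$, resp.\ $z_d$, and to each other along an extra ray, again giving angle $3\pi$; this is presumably what your ``four further half-planes'' was aiming at, but it is incompatible with taking $\Sigma$ to be the whole plane, and a single half-plane glued along a full boundary line would leave $z_1$ a regular point.) With this correction your argument does fill in the paper's terse proof correctly.
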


\begin{proof}Ê
Just throw away all half-plane domains and use $\xi(z)$. 
\end{proof}Ê

The following statement finishes the proof of Theorem~\ref{th:estimate}. 

\begin{proposition}\label{pr:realiz} 
There exist chopped strips with $d-2$ cuts and an arbitrary number $k$ of short geodesics satisfying  Êthe inequality $d-1\le k \le {d \choose 2}$.
\end{proposition}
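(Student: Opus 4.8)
The plan is to invoke Lemma~\ref{lm:strip} to reduce the statement to a planar visibility count, and then to sweep through all intermediate values by a one-parameter deformation combined with a discrete intermediate value argument. First I would normalise the combinatorics: fix the horizontal coordinates once and for all by putting $x_j=j$ for $j=1,\dots,d$, and orient \emph{all} of the $d-2$ cuts upward, so that a chopped strip is completely determined by its height vector $h=(h_1,\dots,h_d)$. By Lemma~\ref{lm:strip} the short geodesics are precisely the segments $[z_i,z_j]$ meeting none of the upward vertical rays issued from the interior vertices $z_2,\dots,z_{d-1}$; equivalently, $[z_i,z_j]$ is a geodesic if and only if its graph lies below $h_m$ at $x=m$ for every $i<m<j$. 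Since no vertex lies strictly between $z_i$ and $z_{i+1}$, the $d-1$ consecutive segments are geodesics for every $h$, which gives the lower bound at once and shows that the whole freedom sits in the $\binom{d-1}{2}$ non-consecutive pairs.

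The two extreme counts are realised by the two extreme shapes of the profile $h$. If $h$ is \emph{strictly concave}, then each chord $[z_i,z_j]$ passes strictly below every interior vertex, no upward ray is ever met, and all $\binom{d}{2}$ segments are geodesics. If $h$ is \emph{strictly convex}, then each such chord passes strictly above every interior vertex, so every non-consecutive segment is cut and exactly the $d-1$ consecutive ones remain. I would take concrete endpoints inside these two open classes, for instance $h^{-}_j=\sqrt{j}$ and $h^{+}_j=j^{2}$; both are strictly increasing, so any convex combination of them is strictly increasing as well and in particular has pairwise distinct entries, which keeps every intermediate configuration an admissible chopped strip.

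Next I would connect them by the segment $h(t)=(1-t)h^{-}+t\,h^{+}$, $t\in[0,1]$, after a small perturbation of the endpoints (strict concavity/convexity being open conditions, the perturbation preserves both the admissibility and the two extreme counts) that makes the path generic. The event ``$z_m$ lies on $[z_i,z_j]$'' is the vanishing of the fixed linear form $(j-i)h_m-(j-m)h_i-(m-i)h_j$, hence a hyperplane in $h$-space; there are finitely many such hyperplanes, and a generic segment meets them one at a time and transversally, while avoiding the codimension-two loci where two of them intersect. For every non-critical $t$ the configuration is in general position, so the number $N(t)$ of geodesics is well defined, with $N(0)=\binom{d}{2}$ and $N(1)=d-1$. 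Crossing one hyperplane flips exactly whether the single segment $[z_i,z_j]$ is cut by the single ray at $z_m$ and touches no other pair, so $N(t)$ changes by at most one at each crossing.

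Finally I would apply the discrete intermediate value principle: a finite sequence of integers whose consecutive differences lie in $\{-1,0,1\}$ and which runs from $\binom{d}{2}$ down to $d-1$ must attain every integer in between. Hence for each prescribed $k$ with $d-1\le k\le\binom{d}{2}$ there is a non-critical parameter $t$ with $N(t)=k$, and the corresponding chopped strip does the job. I expect the genuine obstacle to be exactly the passage through \emph{all} intermediate values: flipping cut directions on a fixed convex configuration only yields the sparse set of totals $d-1+\sum_t\binom{\ell_t+1}{2}$, where $\ell_t$ are the lengths of the maximal runs of downward cuts, and this already misses the value $k=5$ when $d=4$. The continuous deformation above, with its guaranteed unit steps, is precisely what removes these gaps, the remaining technical point being the standard transversality argument ensuring that the degeneracies occur one at a time.
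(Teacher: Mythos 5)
Your proof is correct, but it takes a genuinely different route from the paper. The paper argues by induction on $d$: given a strip with $d-1$ nodes realizing $k'$ geodesics, it appends a $d$-th node hidden behind a cut so that exactly one new geodesic (to the $(d-1)$-st node) appears, realizing every $k\in[d-1,\binom{d-1}{2}+1]$; the remaining top range $[\binom{d-1}{2}+2,\binom{d}{2}]$ is then covered by a separate construction, a convex configuration with one additional cut of adjustable length that kills between $0$ and roughly $\frac{(d-1)^2}{4}$ of the left-to-right geodesics (the paper leaves this step quite sketchy, with an explicit caveat ``for $d\ge 5$''). You instead work uniformly in $d$: fixing all $d-2$ cuts upward reduces geodesics to a visibility condition (chord of $[z_i,z_j]$ below $h_m$ at each interior $m$), a strictly concave height profile realizes $\binom{d}{2}$, a strictly convex one realizes $d-1$, and a generic linear interpolation between them crosses the finitely many collinearity hyperplanes $(j-i)h_m-(j-m)h_i-(m-i)h_j=0$ one at a time, so the geodesic count moves in steps of at most one and the discrete intermediate value principle yields every $k$ in between. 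Your approach buys a single clean argument with no induction, no case split between the two ranges, and no small-$d$ exceptions, at the modest cost of a routine transversality/general-position step; your side computation that direction-flips on a fixed convex configuration only realize the sparse totals $d-1+\sum_t\binom{\ell_t+1}{2}$ (missing $k=5$ for $d=4$) correctly explains why a naive discrete construction does not suffice and the deformation is needed. One convention you share silently with the paper: a segment is allowed to emanate from the base point of the cut at its own endpoint vertex (otherwise even the $d-1$ consecutive geodesics would fail); it is worth stating explicitly, but it is not a gap relative to the paper's reading of Lemma~\ref{lm:strip}.
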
 

\begin{proof} We will use induction on $d$. Assume that one can realize any number $k'$ of short geodesics satisfying $d-2 \le k'\le {d-1 \choose 2}$ for $d-1$. 
Then it is very easy to construct chopped strips with $d$ nodes having exactly $k'+1$ short geodesics. Indeed, draw a cut up or down through the last point  and place the $d$-th in such a way that no short geodesics from points except for $d-1$ will be possible. The only new geodesic connects the $(d-1)$st and the $d$-th point. Thus,  we need to realize the number of short geodesics between ${d-1  \choose 2}+2$ and ${d \choose 2}$. This is easily done by using a convex $d-1$-gon. We can cut it but an additional cut and kill an arbitrary number of short geodesics between $0$ and $\frac{(d-1)^2}{4}$ (geodesics from the left half to the right half). Thus we get what we need for $d\ge 5$.
\end{proof} 

\section{Final remarks} 

\noindent
{\bf 1.}Ê The next question is motivated  by Theorem 2 of \cite {Shin}. Namely, he gave necessary and sufficient conditions for the usual spectral problem  for the Schr\"odinger equation with a polynomial potential  to have infinitely many real eigenvalues (or, similarly, eigenvalues on a given accumulation ray). His necessary and sufficient condition under appropriate assumptions is that there exist a point $z_0\in \bC$ such that  $P(z-z_0)$ is PT-symmetric. (See details in \cite{Shin}.) All tools he is using for the standard spectral problem have their natural counterparts for the problem~\eqref{eq:anal}. 

\begin{problem} Give necessary and sufficient conditions guaranteeing that the spectral problem ~\eqref{eq:anal} has infinitely many eigenvalues lying exactly on some accumulation ray. 
\end{problem}  

\noindent
{\bf 2.} ÊObviously,  not every polynomial $P(z)$ gives rise to a very flat potential $P(z)dz^2$, but in many cases one can obtain a very flat potential by multiplying $P(z)$ by a constant.

\begin{problem} Is it true that for any polynomial $P(z)$ there exists $t\in [0,2\pi)$ such that the differential $e^{t\sqrt{-1}}P(z)dz^2$ is very flat?  
\end{problem}

\end{document}